\newtheorem{remark}{Remark}
\newtheorem{definition}{Definition}
\newtheorem{theorem}{Theorem}
\newtheorem{example}{Example}
\renewcommand*{\backref}[1]{}
\renewcommand*{\backrefalt}[4]{%
    \ifcase #1 (Not cited.)%
    \or        (Cited on page~#2.)%
    \else      (Cited on pages~#2.)%
    \fi}
\newcommand{\C}{\mathbf{C}}
\newcommand{\B}{\mathbf{B}}
\newcommand{\N}{\mathbb{N}}
\newcommand{\A}{\mathbf{A}}
\newcommand{\Grp}{\mathbf{Grp}}
\newcommand{\Mon}{\mathbf{Mon}}
\newcommand{\Set}{\mathbf{Set}}
\newcommand{\map}{\mathrm{map}}
\newcommand{\dom}{\mathrm{dom}}
\newcommand{\cod}{\mathrm{cod}}
\renewcommand{\span}{\mathrm{span}}
\newcommand{\Ker}{\mathrm{Ker}}
\begin{document}

\title[Semi-biproducts in monoids]{Semi-biproducts in monoids}


\author{N. Martins-Ferreira}
\address[Nelson Martins-Ferreira]{Instituto Politécnico de Leiria, Leiria, Portugal}
\thanks{ }
\email{martins.ferreira@ipleiria.pt}

\begin{abstract}
It is shown that the category of semi-biproducts in monoids is equivalent to a category of pseudo-actions. A semi-biproduct in monoids is at the same time a generalization of a semi-direct product in groups and a biproduct in commutative monoids. Every Schreier extension of monoids can be seen as an instance of a semi-biproduct; namely a semi-biproduct whose associated pseudo-action has a trivial correction system. A correction system is a new ingredient  that must be inserted in order to obtain a pseudo-action out of a pre-action and a factor system. In groups, every correction system is trivial. Hence, semi-biproducts there are the same as semi-direct products with a factor system, which are nothing but group extensions. An attempt to establish a general context in which to define semi-biproducts is made. As a result, a new structure of map-transformations is obtained from a category with a 2-cell structure. Examples and first basic properties are briefly explored.

\keywords{Semi-biproduct, biproduct, semi-direct product, groups, monoids, zero-preserving map, pseudo-action, correction system, factor system, factor set, group extension, map-transformation, recognizer, co-recognizer, Schreier extension}


\end{abstract}

\maketitle

\date{Received: date / Accepted: date}

\today

\section{Introduction}

The purpose of this paper is to introduce the notion of a semi-biproduct and to investigate it in the category of monoids.

From a categorical point of view, a biproduct is simultaneously a product and a co-product (see \cite{MacLane}, p.194). Furthermore, when there is a coincidence between products and co-products in a category, each hom-set carries the structure of an abelian group and parallel morphisms can be added. This means that a biproduct can be defined as a diagram of the form
\begin{equation}
\label{diag: biproduct in additive cats}
\xymatrix{X\ar@<-.5ex>[r]_{k} & A\ar@<-.5ex>@{->}[l]_{q}\ar@<.5ex>[r]^{p} & B \ar@{->}@<.5ex>[l]^{s}}
\end{equation}
such that the following conditions hold:
\begin{eqnarray}
ps=1_B\quad,\quad pk=0_{X,B}\label{eq: biproduct1}\\
qs=0_{B,X}\quad,\quad qk=1_{X}\label{eq: biproduct2}\\
kq+sp=1_A.\label{eq: biproduct3}
\end{eqnarray}

It is clear that this definition makes sense even when each hom-set is a commutative monoid. And that seems to be the most general case in which it makes sense. But we may ask the following question: how to generalize the definition so that it can be applied in the category of groups, where semi-direct products are expected to appear?

This work answers the question and  it goes beyond groups. It provides a generalization to the so called \emph{Schreier extensions} in monoids. 

A Schreier extension of monoids (see \cite{NMF et all}) can be seen as a sequence of monoid homomorphisms $$\xymatrix{X\ar[r]^{k}& A \ar[r]^{p} & B,}$$ in which $k=\ker(p)$, with the property that there exist two set-theoretical functions, $s\colon{B\to A}$ and $q\colon{A\to X}$, satisfying conditions $(\ref{eq: Sch-2})$--$(\ref{eq: Sch})$, below. When condition $(\ref{eq: Sch})$ is dropped, then the remaining ones can be reorganized so that they become precisely the conditions  $(\ref{eq: biproduct1})$--$(\ref{eq: biproduct3})$, except that now $q$ and $s$ are not homomorphisms but simply zero-preserving maps.

Thus, the notion of semi-biproduct in monoids arrives naturally as a diagram of the from $(\ref{diag: biproduct in additive cats})$ where $p$ and $k$ are monoid homomorphisms while $q$ and $s$ are zero-preserving maps, satisfying conditions $(\ref{eq: biproduct1})$--$(\ref{eq: biproduct3})$.

Let us see in more detail the long path that has given rise to this simple definition.

In  $\Grp$, the category of groups and group homomorphisms, every split epimorphism
\begin{equation*}
\label{diag: split epi}
\xymatrix{ A\ar@<.5ex>[r]^{p} & B \ar@{->}@<.5ex>[l]^{s}},\quad ps=1_B
\end{equation*}
induces an exact sequence
\begin{equation*}
\label{diag: exact seq}
\xymatrix{ X\ar[r]^{k} & A\ar@<.0ex>[r]^{p} & B, }
\end{equation*}
with  $X=\Ker(p)$. If fixing $X$ in additive notation (but not necessarily abelian) and $B$ in multiplicative notation, then,   up to isomorphism, the group $A$ is recovered as a semi-direct product of the form $X\rtimes_{\varphi}B$, with neutral element $(0,1)\in X\times B$, and group operation
\begin{equation}
(x,b)+(x',b')=(x+\varphi_b(x'),bb').
\end{equation}
Recall that (see e.g. \cite{MacLane2} and its references to previous work) $$\varphi_b\colon{X\to X}$$ is defined, for every $b\in B$, as the map $\varphi_b(x)=q(s(b)+k(x))$, with $q(a)=a-sp(a)\in X$, for every $a\in A$. Moreover, when $p$ is a surjective homomorphism, but not necessarily a split epimorphism, it is still possible to recover the group $A$ as a semi-direct product  with a factor system, $X\rtimes_{\varphi,\gamma}B$. A factor system, $\gamma\colon{B\times B\to X}$, is any map $\gamma(b,b')=q(s(b)+s(b'))=s(b)+s(b')-s(bb')$, for some chosen set theoretical section $s\colon{B\to A}$ of $p$, i.e., such that $ps=1_B$. In this case, the group operation in $X\rtimes_{\varphi,\gamma}B$ becomes
\begin{equation}
(x,b)+(x',b')=(x+\varphi_b(x')+\gamma(b,b'),bb').
\end{equation}

When $A$ is abelian, the action $\varphi_b(x)=s(b)+k(x)-s(b)\in X$ is trivial and so, the surjective homomorphisms with abelian domain are nothing but (symmetric) factor systems.

The case of monoids is different (see \cite{DB.NMF.AM.MS.13}). A surjective monoid homomorphism $p\colon{A\to B}$, in general, cannot be presented as a sequence
\begin{equation*}
\label{diag: exact seq mon}
\xymatrix{ X\ar[r]^(.34){\langle 1,0\rangle} & X\rtimes_{\varphi,\gamma}B \ar@<.0ex>[r]^(.64){\pi_B} & B }
\end{equation*}
with $X=\Ker(p)$ and such that the underlying set of $A$ is in bijection with the cartesian product $X\times B$. Thus, in monoids, the transport of the structure from  $A$ to $X\times B$ is not always possible. So, there is no hope of having an isomorphism $A\cong X\rtimes_{\varphi,\gamma}B$. As we will see, the best that we can hope for is to have $A$ (i.e. the underlying set of $A$) as a subset of the cartesian product. To do that we will need to introduce a new ingredient, a correction system, $\rho(x,b)=x^b$, so that we have 
$$A\cong (X\rtimes_{\varphi,\rho,\gamma} B)\subseteq X\times  B.$$  And yet, not every surjective monoid homomorphism  exhibits that property. Take, for example, the usual addition of natural numbers,
\begin{equation*}
\xymatrix{\N\times \N \ar[r]^{+} & \N,}
\end{equation*}
whose kernel is the trivial monoid.

We may ask: what kind of monoid homomorphisms has the same behaviour as in groups? The answer turns out to be the so called  Schreier extensions (see \cite{Ganci, NMF et all} and references therein, see also \cite{DB.NMF.AM.MS.13}). 


A Schreier extension $p\colon{A\to B}$ is nothing but a monoid homomorphism  with the property that there exists a set theoretical section $s\colon{B\to A}$ for which the function $X\times B\to A$, $(x,b)\mapsto k(x)+s(b)$ is a bijection of the underlying sets, with $k=\ker(p)$.

Equivalently, a Schreier extension can be seen as a monoid homomorphism $p\colon{A\to B}$, together with set theoretical maps $s\colon{B\to A}$ and $q\colon{A\to X}$, with $X=\Ker(p)$ and $k=\ker(p)\colon{X\to A}$, such that
\begin{eqnarray}
ps(b)=b &,& b\in B\label{eq: Sch-2}\\
kq(a)+sp(a)=a &,& a\in A\\
q(k(x)+s(b))=x &,& x\in X, b\in B. \label{eq: Sch}
\end{eqnarray}

The key feature of this work is the observation that,  at the expense of having to insert  a correction system, the condition $(\ref{eq: Sch})$ can be discarded. The correction system controls the lack of condition $(\ref{eq: Sch})$ and will be denoted by $x^b$. It is defined as $x^b=q(k(x)+s(b))$, for every $x\in X$ and $b\in B$. While dropping condition $(\ref{eq: Sch})$, we loose the bijection between the underlying set of $A$ and the cartesian product of sets $X\times B$. However, if adding two simple conditions, namely $q(k(x))=x$ and $q(s(b))=0$, then, $A$ is isomorphic to a subset of the cartesian product $X\times B$. The subset consists of those pairs in  $X\times B$ that are of the form $(x^b,b)$  for some $x\in X$ and $b\in B$ (see Theorem \ref{thm: semi-biproducts}). The monoid operation is
\begin{equation}
(x,b)+(x',b')=((x+b\cdot x'+(b\times b'))^{bb'},bb').
\end{equation}
The correction system $x^b$ has to satisfy some conditions (Definition \ref{def: pseudo-action}). In the case of groups it becomes trivial, that is $x^b=x$. To see it, simply take $b'=b^{-1}$ in equation $(\ref{eq: 53})$. The notion of a correction system is inspired by the work of Leech and Wells on extending groups by monoids (\cite{Leech,Wells}). The fact that the correction system $\rho(x,b)=x^b$ is invisible in groups, hides the true nature of pseudo-actions  as a combination of structures $\varphi$, $\rho$, $\gamma$ (Definition \ref{def: pseudo-action}) satisfying one single major condition (\ref{eq: factor}). Only the particular traces of (\ref{eq: factor}) are familiar in groups. This is better explained in Remark \ref{remark}. A Schreier extension in monoids  is precisely a semi-biproduct with a trivial correction system, which explains the similar behaviour between Schreier extensions in monoids and all extensions in groups. Examples of semi-biproducts in monoids which are not Schreier extensions, that is, with a non-trivial correction system, are presented (Section \ref{sec: eg}).

From a structural point of view, we are considering sequences of monoid homomorphisms
\begin{equation}
\xymatrix{X\ar[r]^{k} & A \ar[r]^{p} & B}
\end{equation} 
together with set theoretical maps,  preserving the neutral element, but not necessarily preserving the monoid operation,
\begin{equation}
\xymatrix{X & A \ar[l]_{q} & B \ar[l]_{s} }
\end{equation} 
and satisfying the conditions $(\ref{eq: biproduct1})$--$(\ref{eq: biproduct3})$. When the maps $q$ and $s$ are monoid homomorphisms then this is exactly the definition of a bi-product. It is then natural to call semi-biproduct when $q$ and $s$ are just zero-preserving maps.

As expected, a semi-biproduct becomes a product as soon as the zero-preserving maps $q$ and $s$ are both morphisms. In that case it is a coproduct as well. Particular instances when just one of $q$ or $s$ is a morphism are  worthwile studying, but we shall no longer expect a product or a co-product to appear. In general it does not make sense to speak about the semi-biproduct of two objects, unless some kind of pseudo-action  is specified. This is not so surprising since in groups, semi-biproducts are the same as semi-direct products with a factor system.  Connections with homology and cohomology are expected via several notions such as abstract kernel, obstruction, etc (see for example \cite{NMF et all} and references there), and it is clear that the notion of pseudo-action considered here (Definition \ref{def: pseudo-action}) can be used as a tool and clarify some classical  interpretations in group-cohomology.

This paper is organized as follows. First we concentrate our attention on the concrete case of monoids, introducing a new concept of pseudo-action and establishing a correspondence between pseudo-actions and semi-biproducts in monoids (Theorem \ref{thm: equivalence}). In Section \ref{sec: eg}  we illustrate the concept with Schreier extensions and other types of more general extensions.

At the end we address the natural question of where to define semi-biproducts. We arrive at an abstract structure of map-transformations. Its origin comes from a convenient structure of 2-cells in monoids. This appears to be a convenient setting to work in a wider context while keeping the intuition from monoids. The observation that semi-biproducts can be defined in such cases is made precise. We briefly discuss some properties of map-transformation structures, such as the property of some monomorphisms being recognizers (Theorem \ref{Th: recognizers}). A list of examples and general procedures on how to generate structures of map-transformations is provided.

\section{Monoids, semi-biproducts and pseudo actions}


In this section we define semi-biproducts in monoids, introduce a new notion of pseudo-action, and prove that pseudo-actions are equivalent to semi-biproducts. Several works have been done in this direction but the emphasis has always been on the side of (Schreier) extensions and their classification (e.g. \cite{ NMF14,Fleischer,Ganci,NMF et all}). Here we consider a semi-biproduct as a mathematical object rather than something which arises from an extension with appropriate choices for a section and a retraction. In a certain sense this work goes in the direction of  \cite{GranJanelidzeSobral}. There, the Schreier condition $(\ref{eq: Sch})$ is still being considered, but it shows that it is possible to work on the wider context of unitary magmas.

\subsection{Semi-biproducts in monoids} 

At the end of this paper we establish a general setting in which semi-biproducts can be defined. When interpreted in the case of monoids (with zero-preserving maps), it gives a diagram of the shape 
\begin{equation}
\label{diag: biproduct}
\xymatrix{X\ar@<-.5ex>[r]_{k} & A\ar@<-.5ex>@{->}[l]_{q}\ar@<.5ex>[r]^{p} & B \ar@{->}@<.5ex>[l]^{s}}
\end{equation}
such that $p$, $k$, are monoid homomorphisms, while  $q$ and $s$ are zero-preserving maps, verifying the conditions
\begin{eqnarray}
ps&=&1_B\\
qk&=&1_X\\
kq+sp&=&1_A\\
pk&=&0_{X,B}\\
qs&=&0_{B,X}.
\end{eqnarray}

 It is possible to characterize all semi-biproducts in monoids with fixed $X$ and $B$. We will work with  additive notation on $X$ and   multiplicative notation on $B$, but neither is assumed to be commutative.


\begin{theorem}\label{thm: semi-biproducts}
Let $(X,A,B,p,k,q,s)$ be a semi-biproduct in the category of monoids with zero-preserving maps. If we put for every $b,b'\in B$ and $x\in X$, 
\begin{eqnarray}
b\cdot x=q(s(b)+k(x))\\
x^b=q(k(x)+s(b))\\
b\times b'=q(s(b)+s(b'))
\end{eqnarray}
then, for every $a,a'\in A$
\begin{equation}\label{eq: a+a'}
a+a'=k(q(a)+p(a)\cdot q(a')+(p(a)\times p(a')))+sp(a+a')
\end{equation}
and 
\begin{equation}\label{eq: kuv+sv}
a+a'=k(u^v)+s(v)
\end{equation}
with $u=q(a)+p(a)\cdot q(a')+(p(a)\times p(a'))\in X$ and $v=p(a+a')\in B$.

Moreover, the map $\beta\colon{A\to X\times B}$ with $\beta(a)=(q(a),p(a))$ is always injective. An element $(y,b)\in X\times B$ is in the image of $\beta$ if and only if  $y=x^b$ for some $x\in X$.

This means that $A$ is in bijection with the set $\{(x^b,b)\mid (x,b)\in X\times B\}$ with the inverse map for $\beta$ being $\alpha(x^b,b)=k(x^b)+s(b)=k(x)+s(b)$. 

Furthermore, for every $a\in A$, $q(a)=q(a)^{p(a)}$.
\end{theorem}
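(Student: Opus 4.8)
The plan is to derive $q(a)=q(a)^{p(a)}$ directly from the definition of the operation $x^b$, invoking only the defining identity $kq+sp=1_A$ of a semi-biproduct. No auxiliary construction is required: the whole argument amounts to a single substitution followed by one application of that identity, so I would treat it as an immediate corollary rather than as a separate computation.

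Concretely, I would start from the definition $x^b=q(k(x)+s(b))$ and specialise it at $x=q(a)$ and $b=p(a)$, which are legitimate choices since $q(a)\in X$ and $p(a)\in B$. This gives
\begin{equation*}
q(a)^{p(a)}=q\bigl(k(q(a))+s(p(a))\bigr)=q\bigl(kq(a)+sp(a)\bigr).
\end{equation*}
The key observation is that the argument $kq(a)+sp(a)$ is exactly the value at $a$ of the map $kq+sp$, and the semi-biproduct axiom $kq+sp=1_A$ forces $kq(a)+sp(a)=a$. Substituting this back yields $q(a)^{p(a)}=q(a)$, which completes the proof.

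I do not expect any genuine obstacle in this part: unlike the earlier claims of the theorem, it requires neither a transport-of-structure argument nor the injectivity of $\beta$, and crucially it does not use that $q$ is a homomorphism (which it is not). The only point worth flagging is the correct reading of $kq+sp=1_A$ as the pointwise identity $k(q(a))+s(p(a))=a$ in the monoid $A$, rather than as an equation of homomorphisms. Conceptually, this final statement dovetails with the image description given just above: it certifies that the first coordinate $q(a)$ of $\beta(a)=(q(a),p(a))$ is itself of the form $x^{p(a)}$ (take $x=q(a)$), which is precisely why the image of $\beta$ lands inside $\{(x^b,b)\mid (x,b)\in X\times B\}$.
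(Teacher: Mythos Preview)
Your argument for the final clause $q(a)=q(a)^{p(a)}$ is correct and is exactly the paper's: specialise the definition $x^b=q(k(x)+s(b))$ at $x=q(a)$, $b=p(a)$ and invoke $kq(a)+sp(a)=a$. The paper phrases it as ``$q(a)=q(kq(a)+sp(a))$ follows directly from $a=kq(a)+sp(a)$'', which is your computation read in the other direction.
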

\begin{proof}
We observe:
\begin{eqnarray*}
a+a'&=& kqa+(spa+kqa')+spa'  \qquad(kq+sp=1)\\
    &=& kqa+kq(spa+kqa')+spa+spa'  \qquad( ps=1,pk=0)\\
    &=&  kqa+kq(spa+kqa')+kq(spa+spa')+s(pa+pa')\\
       &=& k(qa+q(spa+kqa')+q(spa+spa'))+sp(a+a') \\
       &=& k(q(a)+p(a)\cdot q(a')+(p(a)\times p(a')))+sp(a+a')
\end{eqnarray*}
We observe further that for every $a\in A$, $x\in X$, $b\in B$, if $a=k(x)+s(b)$ then $a=k(x^b)+s(b)$, which proves condition $(\ref{eq: kuv+sv})$. This follows from the fact that for every $a\in A$, we have $q(a)=q(a)^{p(a)}$. Indeed, $q(a)=q(kq(a)+sp(a))$ follows directly from $a=kq(a)+sp(a)$.

If $\beta(a)=\beta(a')$ then $q(a)=q(a')$ and $p(a)=p(a')$, hence $a=a'$. This proves $\beta$ is injective. 

If $(x,b)\in X\times B$ is of the form $x=q(a)$, $b=p(a)$ for some $a\in A$, then $x=x^b$ because $q(a)=q(a)^{p(a)}$ as already shown. If $(y,b)\in X\times B$ is of the form $y=x^b$ for some $x\in X$, then there exists $a=k(x)+s(b)\in A$ such that $\beta(a)=(x,b)$.

\end{proof} 

We will now show that there is an equivalence of categories between the category of semi-biproducts in monoids and a suitable category of pseudo-actions in monoids.

\subsection{Pseudo-actions in monoids} Let $X$ and $B$ be two monoids. For convenience, let us again use additive notation for $X$ and multiplicative notation for $B$. Note that neither one is assumed to be commutative.

\begin{definition}\label{def: pseudo-action}
A pseudo-action of $B$ on $X$ consists in three different components:
\begin{enumerate}
\item a pre-action, that is, a map
\[\varphi\colon{B\to X^{X}}\] associating to every element $b\in B$ the map $\varphi_{b}\colon{X\to X}$ which is denoted as $\varphi_{b}(x)=b\cdot x$ and satisfies the conditions
\begin{eqnarray}
1\cdot x&=&x,\quad \forall x\in X\\
b\cdot 0&=&0, \quad \forall b\in B
\end{eqnarray}

\item a correction system, that is, a map
\begin{eqnarray}
\rho\colon{X\times B\to X};\quad 
(x,b)\mapsto x^b
\end{eqnarray}
such that for all $x\in X$ and $b\in B$
\begin{eqnarray}
x^1=x\\
0^b=0
\end{eqnarray}

\item a factor system, that is, a map
\begin{eqnarray}
\gamma\colon{B\times B\to X};\quad 
(b,b')\mapsto b\times b'
\end{eqnarray}
such that for every $b\in B$
\begin{eqnarray}
1\times b=0=b\times 1
\end{eqnarray}
\end{enumerate}
The three components are related via the condition $(\ref{eq: factor})$ which must hold for every $x,x',x''\in X$ and $b,b',b''\in B$.
\begin{eqnarray}
(x+b\cdot((x'+b'\cdot x'' + (b'\times b''))^{b'b''})+(b\times b'b''))^{bb'b''}=\nonumber\\
=((x+b\cdot x'+(b\times b'))^{bb'}+bb'\cdot x''+(bb'\times b''))^{bb'b''}\label{eq: factor}
\end{eqnarray}
\end{definition}

The correction system, $\rho$, is used to correct the fact that in general $b\cdot(x+y)$ is not  equal to $b\cdot x+b\cdot y$. Instead, we have the equalities
 \begin{eqnarray}
(b\cdot(x+y))^b=(b\cdot x+b\cdot y)^b\label{eq: correction1}\\
(x+y)^b=(x+y^b)^b \label{eq: correction2}\\
(x^{b}+b\cdot y)^{b}=(x+(b\cdot y)^b)^b \label{eq: correction3}
\end{eqnarray}
which are obtained (see items \ref{item4}, \ref{item5}, \ref{item6} below) as particular cases of $(\ref{eq: factor})$. This correction is invisible in groups (see item \ref{item2} below) which perhaps explains the reason why it has never appeared explicitly as a structure before. Nevertheless, it has  implicitly been used by Leech \cite{Leech} and Wells \cite{Wells} (see also \cite{Fleischer}).

\begin{remark}\label{remark}
In  $(\ref{eq: factor})$ we may observe the following particular cases of interest:
\begin{enumerate}
\item If taking $x,x',x''$ to be zero then we get
\begin{equation}\label{eq: factsystem}
(b\cdot (b'\times b'')^{bb'}+(b\times b'b''))^{bb'b''}=
((b\times b')^{bb'}+(bb'\times b''))^{bb'b''}
\end{equation}
which, if we ignore the correction system, it becomes the usual formula for a factor system in monoids (see e.g. \cite{Ganci} and references therein).

\item\label{item2} If taking $x=x''=0$, $b=1$ then we get (by putting again $x$ in the place of $x'$, $b$ in the place of $b'$ and $b'$ in the place of $b''$ for readability)
\begin{equation}\label{eq: 53}
(x^{b}+b\times b')^{bb'}=(x+b\times b')^{bb'}
\end{equation}
which explains how different the correction system is from being an action. In particular, if $X$ is right cancellable and $B$ is a  group then the correction system is always trivial. Indeed, if we take $b'=b^{-1}$ we obtain $x^b+(b\times b^{-1})=x + (b\times b^{-1})$ and hence, cancelling out $b\times b^{-1}$, we get $x^b=x$.

\item If taking $x=x'=0$ and $b''=1$ then we get
\begin{equation}\label{eq: factsystemconj}
(b\cdot(b'\cdot x'')^{b'}+(b\times b'))^{bb'}=((b\times b')^{bb'}+bb'\cdot x'')^{bb'}
\end{equation} 
which, in groups, becomes the familiar expression
\begin{equation}
b\cdot(b'\cdot x'')=(b\times b')+bb'\cdot x''-(b\times b')
\end{equation} 
stating that the factor system $b\times b'$ measures, by conjugation, the distance between a pre-action $\varphi$ and an ordinary action.

\item\label{item4} If taking $b=b'=1$ and $x''=0$ then we get
\begin{equation}
(x+x')^{b''}=(x+x'^{b''})^{b''}
\end{equation}
which is exactly the same as $(\ref{eq: correction2})$

\item\label{item5} If taking $b=b''=1$ and $x'=0$ then we get
\begin{equation}
(x^{b'}+b'\cdot x'')^{b'}=(x+(b'\cdot x'')^{b'})^{b'}
\end{equation}
which is exactly the same as $(\ref{eq: correction3})$.

\item\label{item6} If taking $b'=b''=1$ and $x=0$ then we get
\begin{equation}
(b\cdot (x'+x''))^b=((b\cdot x')^b+b\cdot x'')^b
\end{equation}
which, if combined with $(\ref{eq: correction2})$ and $(\ref{eq: correction3})$, gives $(\ref{eq: correction1})$.

\end{enumerate}
\end{remark}

\subsection{The equivalence}

It will be clear at the end of this subsection that there is an equivalence between pseudo-actions and semi-biproducts in monoids.

\begin{theorem}\label{thm: pseudo-actions}
Let $(X,A,B,p,k,q,s)$ be a semi-biproduct in the category of monoids with zero-preserving maps. The system with  three components
\begin{eqnarray}
b\cdot x=q(s(b)+k(x))\\
x^b=q(k(x)+s(b))\\
b\times b'=q(s(b)+s(b'))
\end{eqnarray}
is a pseudo action from $B$ into $X$.
\end{theorem}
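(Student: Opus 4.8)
The plan is to verify each clause of Definition \ref{def: pseudo-action} in turn, reserving the factor condition (\ref{eq: factor}) for last since it carries all the difficulty. First I would dispatch the normalization clauses, all of which are immediate from the semi-biproduct axioms together with the fact that $s$ and $k$ preserve the neutral element. For instance $1\cdot x=q(s(1)+k(x))=q(k(x))=x$ because $s(1)=0$ and $qk=1_X$, while $b\cdot 0=q(s(b)+k(0))=q(s(b))=0$ because $k(0)=0$ and $qs=0_{B,X}$. The remaining identities $x^1=x$, $0^b=0$ and $1\times b=0=b\times 1$ are read off the same way from $s(1)=0$, $k(0)=0$, $qk=1_X$ and $qs=0_{B,X}$.

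Second, I would isolate three structural identities obtained by applying $kq+sp=1_A$ to well-chosen elements. Writing $a=kq(a)+sp(a)$ for $a=k(x)+s(b)$, for $a=s(b)+k(x)$ and for $a=s(b)+s(b')$, and using $pk=0_{X,B}$, $ps=1_B$ together with the three defining formulas, yields
\begin{eqnarray*}
k(x^b)+s(b) &=& k(x)+s(b),\\
k(b\cdot x)+s(b) &=& s(b)+k(x),\\
k(b\times b')+s(bb') &=& s(b)+s(b').
\end{eqnarray*}
From these I would derive the key computational lemma: for all $y,z\in X$ and $c,c'\in B$,
\[
(y+c\cdot z+(c\times c'))^{cc'}=q(k(y)+s(c)+k(z)+s(c')).
\]
This is proved by expanding the left-hand side through the definition $x^b=q(k(x)+s(b))$, distributing the homomorphism $k$, and then collapsing the blocks $k(c\times c')+s(cc')$ and $k(c\cdot z)+s(c)$ by the last two identities above.

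Third, I would prove the factor condition (\ref{eq: factor}) purely from associativity of $+$ in $A$. Put $E=k(x)+s(b)+k(x')+s(b')+k(x'')+s(b'')\in A$. Reading $E$ as $\bigl(k(x)+s(b)+k(x')+s(b')\bigr)+\bigl(k(x'')+s(b'')\bigr)$, the key lemma gives $q(k(x)+s(b)+k(x')+s(b'))=(x+b\cdot x'+(b\times b'))^{bb'}=:w$, so by $kq+sp=1_A$ the first block equals $k(w)+s(bb')$; a second application of the key lemma then computes $q(E)$ as the right-hand side of (\ref{eq: factor}). Reading $E$ instead as $\bigl(k(x)+s(b)\bigr)+\bigl(k(x')+s(b')+k(x'')+s(b'')\bigr)$ and symmetrically rewriting the second block as $k(w')+s(b'b'')$ with $w'=(x'+b'\cdot x''+(b'\times b''))^{b'b''}$, the key lemma computes the same $q(E)$ as the left-hand side of (\ref{eq: factor}). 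Since $+$ is associative in $A$, the two expressions for $q(E)$ coincide, which is exactly (\ref{eq: factor}).

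The main obstacle is conceptual rather than computational. A naive associativity computation via the multiplication formula of Theorem \ref{thm: semi-biproducts} produces the corrected terms $x^b$, $x'^{b'}$, $x''^{b''}$ rather than the bare $x,x',x''$ demanded by (\ref{eq: factor}), precisely because $q(k(x)+s(b))=x^b$ rather than $x$. The key lemma is what resolves this: it expresses the relevant expressions directly as $q$ of an explicit sum in $A$, so that bare variables appear from the outset, and it also shows that $(y+c\cdot z+(c\times c'))^{cc'}$ and $(y^c+c\cdot z^{c'}+(c\times c'))^{cc'}$ share the value $q(k(y)+s(c)+k(z)+s(c'))$ — via $k(y^c)+s(c)=k(y)+s(c)$ — which is exactly why the uncorrected factor identity holds for \emph{arbitrary} $x,x',x''\in X$ and not merely on the image of $q$.
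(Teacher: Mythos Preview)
Your proof is correct and rests on the same idea as the paper's: both reduce condition (\ref{eq: factor}) to associativity of $+$ in $A$ via the decomposition coming from $kq+sp=1_A$. The paper simply invokes the formula of Theorem~\ref{thm: semi-biproducts} and says ``consider $a+(a'+a'')=(a+a')+a''$ with $x=q(a)$, $b=p(a)$, etc.'', whereas you repackage that content as the three structural identities plus the key lemma $(y+c\cdot z+(c\times c'))^{cc'}=q(k(y)+s(c)+k(z)+s(c'))$. Your organization has one real advantage: it makes explicit why (\ref{eq: factor}) holds for \emph{arbitrary} $x,x',x''\in X$ rather than only for triples in the image of $\langle q,p\rangle$. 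The paper's one-line argument, read literally, only yields the identity for $x=q(a)$, $b=p(a)$, i.e.\ for pairs satisfying $x=x^b$; your key lemma shows that the relevant expressions depend only on $k(y)+s(c)$, and since $k(y)+s(c)=k(y^c)+s(c)$ the correction is absorbed. So the two arguments are mathematically the same, but yours fills in a step the paper leaves implicit.
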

\begin{proof}
The heart of the proof relies on the decomposition 
\[a+a'=k(q(a)+p(a)\cdot q(a')+(p(a)\times p(a')))+sp(a+a')
\] which holds for every $a,a'\in A$, as proved in Theorem \ref{thm: semi-biproducts}.



To obtain $(\ref{eq: factor})$, we consider $a+(a'+a'')=(a+a')+a''$ with $x=q(a), x'=q(a'), x''=q(a'')$ and $b=p(a), b'=p(a'), b''=p(a'')$. 
To prove $b\cdot 0=0$, $b\times 1=0=1\times b$ and $0^b=0$ we use the fact that $qs(b)=0$ for all $b\in B$ and the other conditions are easily verified.
\end{proof}


This means that if starting with a semi-biproduct $(X,A,B,p,k,q,s)$ we obtain a pseudo-action $(\varphi,\rho,\gamma)$ with $\varphi_{b}(x)=b\cdot x$, $\rho(x,b)=x^b$ and $\gamma(b,b')=b\times b'$ as defined in Theorem \ref{thm: pseudo-actions}. We only need to make a  straightforward calculation to see that given a pseudo-action we obtain a semi-biproduct in monoids. All the necessary ingredients have been given in Theorem \ref{thm: semi-biproducts}. Let us make it more precise. Given a pseudo-action $(\varphi,\rho,\gamma)$ of $B$ on $X$, we construct a synthetic (see \cite{Leech}) semi-biproduct in monoids
\begin{equation}
\xymatrix{X\ar@<-.5ex>[r]_{k} & A\ar@<-.5ex>@{->}[l]_{q}\ar@<.5ex>[r]^{p} & B \ar@{->}@<.5ex>[l]^{s}}
\end{equation}
with $A=\{(x^b,b)\mid x\in X, b\in B\}$ a monoid with neutral element $(0,1)\in X\times B$ and whose operation  is the restriction to the operation defined on $X\times B$ as
\begin{equation}
(x,b)+(x',b')=((x+b\cdot x'+(b\times b'))^{bb'},bb')
\end{equation}
 which is well defined and associative on the subset $A$. However, in general, it fails to be associative on the set $X\times B$. The homomorphisms $p$ and $k$, as well as the zero-preserving maps $q$ and $s$, are the expected ones.
 The monoid $A$ is in some sense a semi-direct product with a correction system and a factor system, so, it would be appropriate to write it as $X\rtimes_{\varphi,\rho,\gamma}B$.

\begin{theorem}\label{thm: equivalence}
There is an equivalence between the category of semi-biproducts in mo\-noids and the category of pseudo-actions.
\end{theorem}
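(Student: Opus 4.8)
The goal is to establish an equivalence of categories between semi-biproducts in monoids and pseudo-actions. By the prior theorems, I already have the two directions of the object-level correspondence: Theorem~\ref{thm: pseudo-actions} produces a pseudo-action from a semi-biproduct, and the synthetic construction $X\rtimes_{\varphi,\rho,\gamma}B$ produces a semi-biproduct from a pseudo-action. So the plan is to upgrade these assignments to functors and then verify they are mutually quasi-inverse. The main work is therefore (i) fixing the morphisms in each category, (ii) checking functoriality, and (iii) exhibiting natural isomorphisms $\eta$ and $\varepsilon$ realizing the two round-trips as isomorphic to the respective identity functors.

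First I would pin down the morphisms. A morphism of semi-biproducts from $(X,A,B,p,k,q,s)$ to $(X',A',B',p',k',q',s')$ should be a triple $(f,g,h)$ of monoid homomorphisms $f\colon X\to X'$, $g\colon A\to A'$, $h\colon B\to B'$ commuting with all four structure maps $k,p,q,s$ in the evident squares (on the homomorphism side with $k,p$, and on the zero-preserving side with $q,s$). A morphism of pseudo-actions from $(X,B,\varphi,\rho,\gamma)$ to $(X',B',\varphi',\rho',\gamma')$ should be a pair $(f,h)$ of monoid homomorphisms compatible with the three components, i.e.\ $f(b\cdot x)=h(b)\cdot f(x)$, $f(x^b)=f(x)^{h(b)}$, and $f(b\times b')=h(b)\times h(b')$. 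With these definitions in hand, the functor $\Phi$ sending a semi-biproduct to its pseudo-action sends $(f,g,h)\mapsto(f,h)$; one checks the three compatibility identities directly from the formulas $b\cdot x=q(s(b)+k(x))$, $x^b=q(k(x)+s(b))$, $b\times b'=q(s(b)+s(b'))$ together with the commutation of $g$ with $k,s,q$. Conversely the functor $\Psi$ sends a pseudo-action to its synthetic semi-biproduct, with $(f,h)$ inducing $g\colon A\to A'$ by $g(x^b,b)=(f(x)^{h(b)},h(b))$; here one must check $g$ lands in $A'$ (which follows since $f(x)^{h(b)}$ is visibly of the required form) and is a monoid homomorphism (which follows from the compatibility identities plugged into the operation on $X\rtimes_{\varphi,\rho,\gamma}B$).

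Next I would construct the natural isomorphisms. For the composite $\Phi\Psi$ on a pseudo-action, the round trip recovers $b\cdot x=q(s(b)+k(x))$ evaluated in the synthetic model, where $s(b)=(0,b)$, $k(x)=(x,1)$, and $q(y,b)=y$; computing $s(b)+k(x)=((0+b\cdot x+(b\times 1))^{b},b)=(b\cdot x,b)$ gives back $b\cdot x$ on the nose, and similarly for $\rho$ and $\gamma$, using the normalization axioms $b\times 1=0=1\times b$, $b\cdot 0=0$, $x^1=x$. Hence $\Phi\Psi$ is (isomorphic to, in fact equal to) the identity. For the composite $\Psi\Phi$ on a semi-biproduct $(X,A,B,p,k,q,s)$, the relevant natural isomorphism is exactly the map $\beta\colon A\to \Psi\Phi(A)$, $a\mapsto(q(a),p(a))$, already shown in Theorem~\ref{thm: semi-biproducts} to be a bijection onto $\{(x^b,b)\mid x\in X\}=\Psi\Phi(A)$ with inverse $\alpha(x^b,b)=k(x)+s(b)$. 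I would verify that $\beta$ is a monoid homomorphism by comparing $\beta(a+a')=(q(a+a'),p(a+a'))$ with the synthetic product $(q(a),p(a))+(q(a'),p(a'))$, which equals $((q(a)+p(a)\cdot q(a')+(p(a)\times p(a')))^{p(a)p(a')},p(a)p(a'))$; this matches precisely by the decomposition~(\ref{eq: a+a'})--(\ref{eq: kuv+sv}) together with $q(a+a')=u^v$ and $v=p(a+a')$. Naturality of $\beta$ and the identity $\Phi\Psi=\mathrm{id}$ in the triple $(f,g,h)$ is then a routine diagram chase.

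I expect the main obstacle to be the bookkeeping in verifying that $g$ (on the $\Psi$ side) and $\beta$ (on the $\Psi\Phi$ side) are genuine monoid homomorphisms landing in the correct subsets, since the operation on $X\rtimes_{\varphi,\rho,\gamma}B$ is only associative on $A$ and not on all of $X\times B$; every computation must stay inside the admissible subset. The conceptual content, however, is already discharged: Theorem~\ref{thm: semi-biproducts} supplies the bijection $\beta$ and its inverse, Theorem~\ref{thm: pseudo-actions} supplies well-definedness of the pseudo-action, and the fundamental identity~(\ref{eq: factor}) guarantees associativity of the synthetic operation. What remains is purely the functorial and naturality layer, which is mechanical once the morphism classes are fixed. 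I would therefore organize the proof as: define morphisms, define $\Phi$ and $\Psi$ on morphisms and check functoriality, establish $\Phi\Psi=\mathrm{id}$ by direct substitution, and establish $\Psi\Phi\cong\mathrm{id}$ via the natural isomorphism $\beta$.
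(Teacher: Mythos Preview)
Your overall architecture is sound and considerably more explicit than the paper's sketch, but there is a genuine computational error in your treatment of $\Phi\Psi$. You claim that in the synthetic model $s(b)+k(x)=((0+b\cdot x+(b\times 1))^{b},b)=(b\cdot x,b)$, and hence that the recovered pre-action is $b\cdot x$ ``on the nose''. This is false: the normalization axioms only kill $b\times 1$, not the outer correction, so the first coordinate is $(b\cdot x)^b$, not $b\cdot x$. The same slip occurs for the factor system: $s(b)+s(b')=((b\times b')^{bb'},bb')$, so the recovered $\gamma$ is $(b\times b')^{bb'}$, not $b\times b'$. Only $\rho$ comes back unchanged. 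Thus $\Phi\Psi$ is \emph{not} the identity functor on pseudo-actions; the round trip produces the ``corrected'' triple $(\varphi',\rho,\gamma')$ with $\varphi'_b(x)=(b\cdot x)^b$ and $\gamma'(b,b')=(b\times b')^{bb'}$.

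The paper is aware of this and handles it differently: it explicitly records the derived pseudo-action $(\varphi',\rho',\gamma')$ and argues that it is \emph{isomorphic} (not equal) to the original, invoking the idempotency $(x^b)^b=x^b$ from equation~(\ref{eq: correction2}). This has a knock-on effect on your definition of morphisms of pseudo-actions: under your strict requirement $f(b\cdot x)=h(b)\cdot f(x)$ and $f(b\times b')=h(b)\times h(b')$, the identity pair $(1_X,1_B)$ does \emph{not} give an isomorphism $(\varphi,\rho,\gamma)\cong(\varphi',\rho,\gamma')$, since that would force $b\cdot x=(b\cdot x)^b$ for all $x,b$. So either the morphisms of pseudo-actions must be defined more loosely (e.g.\ compatibility only after applying $\rho$, as in $f(b\cdot x)^{h(b)}=(h(b)\cdot f(x))^{h(b)}$), or one must exhibit a non-identity component for the natural isomorphism. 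Fix this point and the rest of your plan goes through; your treatment of $\Psi\Phi$ via $\beta$ is exactly what the paper does.
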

\begin{proof}
Start with a semi-biproduct $(X,A,B,p,k,q,s)$ and let $(\varphi,\rho,\gamma)$ be its associated pseudo action. The synthetic semi-biproduct constructed with the pseudo-action is isomorphic to the original semi-biproduct as illustrated
\begin{equation}
\xymatrix{X\ar@<-.5ex>[r]_{k}\ar@{=}[d] & A\ar@{->}@<.5ex>[d]^{\beta}\ar@<-.5ex>@{-->}[l]_{q}\ar@<.5ex>[r]^{p} & B\ar@{=}[d] \ar@{->}@<.5ex>@{-->}[l]^{s}\\
X\ar@<-.5ex>[r]_(.4){\langle 1,0 \rangle} & X\rtimes_{\varphi,\rho,\gamma}B\ar@{->}@<.5ex>[u]^{\alpha} \ar@<-.5ex>@{-->}[l]_(.6){\pi_{1}}\ar@<.5ex>[r]^(.6){\pi_2} & B \ar@{->}@<.5ex>@{-->}[l]^(.4){{\langle 0 ,1 \rangle}}}
\end{equation}
with the morphisms  $\alpha$ and $\beta$ defined as in Theorem \ref{thm: semi-biproducts}.

If we start with a pseudo-action $(\varphi,\rho,\gamma)$, build its associated synthetic semi-biproduct, and extract the pseudo-action associated to it, then a new pseudo-action is derived, say $(\varphi',\rho',\gamma')$. The derived pseudo-action is obtained as $\varphi'_b(x)=(b\cdot x)^b$, $\rho'(x,b)$ is the same as $\rho(x,b)=x^b$ and $\gamma'(b,b')=(b\times b')^{bb'}$. Nevertheless these two pseudo-actions are isomorphic because $(x^b)^b=x^b$, which is a consequence of equation (\ref{eq: correction2}).
\end{proof}

\section{Examples of semi-biproducts in monoids}\label{sec: eg}

Every Schreier split extension (\cite{NMF14}) of a monoid $X$ by a monoid $B$  is a semi-biproduct in monoids. In this case $b\times b'=0$ and $x^b=x$ for all $x\in X$, $b,b'\in B$. Every Schreier extension of monoids is a semi-biproduct in which $x^b=x$. In fact, a semi-biproduct of monoids $(X,A,B,p,k,q,s)$ is a Schreier extension if and only if $x^b=x$ for all $x\in X$ and $b\in B$.

Here is a general procedure for the construction of semi-biproducts which are not necessarily Schreier extensions. 

Let $X$ (written additively) and $B$ (written multiplicatively) be two monoids and let us suppose the existence of a subset $R\subseteq X\times B$, considered as a binary relation, so that we write $xRb$ instead of $(x,b)\in R$, together with two maps $u\colon{B\to X}$ and $q\colon{R\to X}$ satisfying the following conditions:
\begin{eqnarray}
\text{ for all $x\in X$, $xR1$ and $q(xR1)=x$}\\
\text{ for all $b\in B$, $u(b)Rb$ and $q(u(b)Rb)=0$}\\
\text{ if $xRb$, $yRb$ and $q(xRb)=q(yRb)$ then $x=y$.}\label{item:q}
\end{eqnarray}
For every monoid structure on the set $R$, for which $0R1$ is the neutral element and the projection map $xRb\mapsto b$ is a homomorphism, we put:
\begin{eqnarray*}
x\oplus x'&=&q(xR1+x'R1)\\
b\times b'&=&q(u(b)Rb+u(b')Rb')\\
b\cdot x&=&q(u(b)Rb+xR1)\\
x^b&=&q(xR1+u(b)Rb).
\end{eqnarray*}

If $x\oplus x'=x+x'$ for all $x,x'\in X$ (where $+$ is the monoid operation on $X$) and if $q(xRb)^b=q(xRb)$, whenever $xRb$, then we have a semi-biproduct
\begin{equation}
\xymatrix{X\ar@<-.5ex>[r]_{k} & R\ar@<-.5ex>@{-->}[l]_{q}\ar@<.5ex>[r]^{p} & B \ar@{-->}@<.5ex>[l]^{s}}
\end{equation}
with $p(xRb)=b$, $k(x)=(xR1)$, $s(b)=(u(b)Rb)$.

 The monoid operation on $R$ is given by the formula
\begin{equation}
(xRb)+(x'Rb')=(x+b\cdot x'+ b\times b')^{bb'}Rbb'.
\end{equation}
Indeed, $k$ is a homomorphism because $\oplus$ is the monoid operation defined on $X$, $p$ is a homomorphism by assumption, $pk=0$, $ps=1$, $qk=1$, $qs=0$ by construction  and we have, for every $xRb$,
\begin{equation}\label{eq:xRb}
xRb=q(xRb)R1+u(b)Rb
\end{equation}
which means that $kq+sp=1_R$. It is not difficult to see that $(\ref{eq:xRb})$ follows from the fact that $q(xRb)^b=q(xRb)$ together with the fact that the map $\langle q,p\rangle\colon{R\to X\times B}$ is injective (condition $(\ref{item:q})$ above). 


In particular, when we force $q(xRb)=x$ and $u(b)=0$ then we are reduced to the analysis of monoid structures on $R$. In that case, the condition $q(xRb)^b=q(xRb)$ becomes $x^b=x$ whenever $xRb$, and if we are looking for semi-biproducts which are not Schreier extensions then we have to consider relations which are not in bijection with the set $X\times B$. Let us work a concrete example.

Take $X=\{0,s\}$ with $s+s=s$ and $B=\{1,t\}$ with $t^2=t$. Clearly, $R=\{(0,1),(s,1),(0,t)\}$ is the only possible proper subset of $X\times B$ with $xR1$ and $0Rb$ for every $x\in X$ and $b\in B$. There are two possible solutions to turn $R$ into a monoid with $0R1$ as neutral element and so that the map $p(xRb)=b$ is a homomorphism.

One possibility is to map $0R1\mapsto 1$, $sR1\mapsto -1$, $0Rt\mapsto 0$ and take the usual multiplication on the set $\{-1,0,1\}$. However, in this case we find that $s\oplus s=0$ while $s+s=s$. So, it does not give a semi-biproduct.

Another possibility is to consider the chain semilattice structure on $R$ as follows
\begin{equation*}
\begin{tabular}{c|ccc}
+ & $0R1$ & $sR1$ & $0Rt$\\
\hline
$0R1$ & $0R1$ & $sR1$ & $0Rt$\\
$sR1$ & $sR1$ & $sR1$ & $0Rt$\\
$0Rt$ & $0Rt$ & $0Rt$ & $0Rt$
\end{tabular}
\end{equation*}
In this case we have
\begin{equation*}
\begin{tabular}{c|c|c|c|c|c|c|c}
$x$ & $b$ & $x'$ & $b'$ & $x\oplus x'$ & $b\times b'$ & $b\cdot x$ & $x^b$\\
\hline
$0$ & $1$ & $0$ & $1$ & $0$ & $0$ & $0$ & $0$ \\
$0$ & $t$ & $s$ & $1$ & $s$ & $0$ & $0$ & $0$ \\
$s$ & $1$ & $0$ & $t$ & $s$ & $0$ & $s$ & $s$ \\
$s$ & $t$ & $s$ & $t$ & $s$ & $0$ & $0$ & $0$ 
\end{tabular}
\end{equation*}
and since $x\oplus x'=x+x'$ and $x^b=x$ whenever $xRb$ (note that $s^t=0$ but $(s,t)\notin R$) we obtain a semi-biproduct of monoids 
\begin{equation}
\xymatrix{\{0,s\}\ar@<-.5ex>[r]_{\iota_1} & R\ar@<-.5ex>@{-->}[l]_{\pi_1}\ar@<.5ex>[r]^{\pi_2} & \{1,t\} \ar@{->}@<.5ex>[l]^{\iota_2}}
\end{equation}
with $\pi_2(xRb)=b$, $\iota_2(b)=(0Rb)$, $\pi_1(xRb)=x$, $\iota_1(x)=(xR1)$. It cannot be a Schreier extension because $R$ has three elements while $X\times B$ has four elements.

Let us now see a simple example which illustrates the case when we take $R$ to be in bijection with $X\times B$. Put $X=B=\mathbb{N}$ the additive monoid of natural numbers and consider the order relation $R=\{(x,b)\in \mathbb{N}^2\mid x\geq b\}$ together with the two maps $q(x\geq b)=x-b$ and $u(b)=b$. The usual component-wise addition on $R$ verifies all the conditions specified by the  construction scheme outlined above and hence it gives rise to a semi-biproduct. Namely, $(\mathbb{N},R,\mathbb{N},p,k,q,s)$ with $p(x\geq b)=b$, $k(x)=(x\geq 0)$, $q(x\geq b)=x-b$ and $s(b)=(b\geq b)$. In this case $q$ and $s$ are both homomorphisms and hence $x^b=x$ for every $(x,y)\in \mathbb{N}\times \mathbb{N}$ so that we obtain an isomorphism
\[
\xymatrix{\mathbb{N}\ar@{=}[d]_{}\ar@<-.5ex>[r]_{k} & R\ar@<-.5ex>[d]_{\beta}\ar@<-.5ex>@{->}[l]_{q}\ar@<.5ex>[r]^{p} & \mathbb{N}\ar@{=}[d]^{} \ar@{->}@<.5ex>[l]^{s}\\
\mathbb{N}\ar@<-.5ex>[r]_{\langle 1,0\rangle} & \mathbb{N}\times \mathbb{N}\ar@<-.5ex>[u]_{\alpha}\ar@<-.5ex>@{->}[l]_{\pi_1}\ar@<.5ex>[r]^{\pi_2} & \mathbb{N} \ar@{->}@<.5ex>[l]^{\langle 0,1\rangle}}
\]
with $\beta(x\geq b)=(x-b,b)$ and $\alpha(x,b)=(x+b\geq b)$. See Theorem \ref{thm: equivalence}.

\section{What is the right context in which to define semi-biproducts?}
   
The question of choosing an appropriate setting in which to define semi-biproducts is a natural one. We will merely give some directions that we expect to follow in future research. It all starts with the observation that zero-preserving maps, an essential part on the definition of semi-biproduct (mainly because they are closed under component-wise addition), can be seen as a particular instance of a 2-cell structure in the category of monoids. The following subsections are an attempt to explain better what we have in mind.

\subsection{The category of monoids with a 2-cell structure}

It is well known that a monoid can be seen as a one object category. This means that two parallel monoid homomorphisms, $f,g\colon{A\to B}$, can be seen as two functors. Thus, a natural transformation, $\tau\colon{f\Longrightarrow g}$ becomes nothing but an element $t\in B$ for which $t+f(x)=g(x)+t$ for all $x\in A$. In this case we write $\tau=(f,t,g)$. Pushing it a little bit further we may say that a \emph{transformation} from $f$ to $g$ is a zero-preserving map $t\colon{A\to B}$ such that $t(x)+f(x)=g(x)+t(x)$, for all $x\in A$. This gives rise to an example of a 2-cell structure in the sense of \cite{NMF.15}. See also \cite{Brown}, where they are called \emph{wiskered categories}. A wiskered category, or a category with a 2-cell structure, is not a 2-category because the middle interchange law is not present.

 A 2-cell structure over a category $\A$ is a system $(H,\dom,\cod,0,+)$ in which $H\colon{\A^{\text{op}}\times \A\to \Set}$ is a bifunctor, $\dom,\cod\colon{H\to\hom_{\A}}$, $0\colon{\hom_{\A}\to H}$ and $$+\colon{H\times_{\hom} H\to H}$$ are natural transformations satisfying certain conditions (the details can be found in \cite{NMF.15} but will not be needed here). We only need to observe that if defining, for every two monoids $A$ and $B$, the set $H(A,B)$ as consisting in all the triples of the form $\tau=(f,t,g)$, with $f,g\colon{A\to B}$ monoid homomorphisms, and $t\colon{A\to B}$ a zero-preserving map with the property  $t(x)+f(x)=g(x)+t(x)$, for all $x\in A$, then it gives rise to a 2-cell structure on the category of monoids. Indeed, $\dom(\tau)=f$, $\cod(\tau)=g$, $0(f)=(f,0_{A,B},f)$ and if  $\tau'=(g,t',h)$ then $\tau'+\tau=(f,t'+t,h)$.
 
We point out that this is not a natural 2-cell structure  and hence it does not give rise to a horizontal composition as would be expected in a 2-category.
  Nevertheless, we do have a reasonable formula for the horizontal composition of 2-cells, namely
  \begin{equation}\label{eq: horizontal composition}
(f,t,g)\circ(f',t',g')=(ff',tt',gg')
\end{equation}
for  homomorphisms $f,f',g,g'$ and zero-preserving maps $t$ and $t'$ with appropriate domains and codomains. In a small parenthesis we may add that for the middle interchange law to hold, it would be needed that $gt'+tf'$ is equal to $tg'+ft'$, which is not the case in general.
 
 Note that in $\tau=(f,t,g)$, when $f$ and $g$ are the zero homomorphisms, then $t$ becomes a zero-preserving map with no other conditions.
 
 We conclude this subsection by remarking that if we denote by $\map(A,B)$ the collection of all zero-preserving maps from a monoid $A$ into a monoid $B$, then, it is derived from the bifunctor $H$, as defined above, with\[\map(A,B)=\{\tau\in H(A,B)\mid \dom(\tau)=0_{A,B}=\cod(\tau)\}.\]
 
 This suggests to attempt for an abstraction of $\map(A,B)$. It is a bifunctor into the category of pointed sets (as soon as the category $\A$ is pointed) and it factors through the category of monoids. Moreover, there is a natural inclusion $\varepsilon\colon{\hom_{\A}\to\map}$ which, in the current example of monoids, is defined as $\varepsilon(f)=(0_{A,B},f,0_{A,B})$ for every homomorphism $f\colon{A\to B}$. Its compatibility with $(\ref{eq: horizontal composition})$,  when restricted from $H$ to $\map$, is expressed as follows:
 \begin{eqnarray}
 (0,t,0)\circ \varepsilon(h)=(0,th,0)=(0,t,0)h,\\
 \varepsilon(g)\circ (0,t,0)=(0,gt,0)=g(0,t,0).
 \end{eqnarray}
    The formulas on the right are the wiskering compositions between morphisms and 2-cells. Compare these with $(\ref{eq: xf})$ and $(\ref{eq: gx})$ below.
   
\subsection{A (pointed) category with a map-transformation structure}\label{subsec: basepoint structure}

In order to speak of semi-biproducts we need to introduce a setting where addition of morphisms is possible. As follows from the case in monoids, the (componentwise) addition of two parallel morphisms fails, in general, to be a morphism and it is only a zero-preserving function. This fact, together with the explanation provided in the previous subsection, suggests to introduce the notion of a (pointed) category with an abstract  structure of zero-preserving maps, that we decided to call map-transformations, for convenience of notation.

Let  $\A$ be a pointed category and let $\hom_{\A}$ denote its hom-functor. A  \emph{map-transformation structure} on $\A$ consists in a bifunctor 
 \begin{equation}
 \map\colon{\A^{\mathrm{op}}\times \A \to \Set_{*}}
 \end{equation}
that factors through the category of monoids
\begin{equation}\label{diag:Map}
\xymatrix{\A^{\mathrm{op}}\times \A \ar[r]^{\mathrm{Map}} \ar[dr]_{\map}& \Mon\ar[d]_{F}\\&\Set_{*}}
\end{equation}
together with a natural inclusion (of pointed sets)
\begin{equation}
\varepsilon\colon{\hom_{\A} \to  \map}
\end{equation}
and, for every three objects $A,B,C$ in $\A$, an associative composition of \emph{map-transformations}
\begin{equation}
\mu_{A,B,C}\colon{\map(B,C)\times\map(A,B)\to\map(A,C)}
\end{equation}
such that, for every $x\in \map(A,B)$, and for every two morphisms $f\colon{A'\to A}$, $g\colon{B\to B'}$, 
\begin{equation}
\mu(\varepsilon(f),x)=\map(f,1)(x)
\end{equation}
and 
\begin{equation}
\mu(x,\varepsilon(g))=\map(1,g)(x).
\end{equation}

If we put $gxf=\map(f,g)(x)$ and use $gx$ and $xf$ instead of $gx1$ and $1xf$, then, the conditions on a map-transformation structure translate as
\begin{eqnarray}
g0f=0\\
g(x+y)f=gxf+gyf\label{eq21}\\
1x1=x\\
g'(gxf)f'=(g'g)x(ff')\\
\varepsilon(0)=0\\
g\varepsilon(u)f=\varepsilon(guf)\\
\mu(x,\varepsilon(f))=xf\label{eq: xf}\\
\mu(\varepsilon(g),x)=gx\label{eq: gx}\\
\mu(x,\mu(x',x''))=\mu(\mu(x,x'),x'')
\end{eqnarray}
for every $x,y\in\map(A,B)$, and $u\colon{A\to B}$, $f\colon{A'\to A}$, $f'\colon{A''\to A'}$, $g\colon{B\to B'}$, $g'\colon{B'\to B''}$, and $x'\in\map(A',A)$, $x''\in\map(A'',A')$.

The elements in $\map(A,B)$ are called map-transformations from $A$ to $B$ and are represented with double arrows. This means that an element $x\in\map(A,B)$ is  displayed as
\[\xymatrix{A\ar@{=>}[r]^x & B}\] and the element $gxf$ is obtained as
\begin{equation}\label{diag: gxf}
\xymatrix{A\ar@{=>}[r]^{x} & B \ar[d]^{g}
\\ A'\ar[u]^{f} \ar@{=>}[r]_{gxf} & B'.}
\end{equation}

This definition and its interpretation, namely viewing an element $x\in\map(A,B)$ as a super-morphism from $A$ to $B$ is motivated with the striking analogy between map-transformations and 2-cells in a category with a 2-cell structure (or a sesquicategory) as studied in \cite{NMF.15}. Indeed, the list of conditions above is very similar to a list in \cite{NMF.15}, characterizing 2-cell structures in terms of a vertical composition (written additively) and left and right actions (whiskering composition).

The use of the word \emph{transformation} seems to be appropriate in the sense that it captures the idea of something like a morphism, but different. In a category with a map-transformation structure every morphism, $f$, is also a (map-)transformation, $\varepsilon(f)$, but not every transformation is a morphism. Let us see some examples.

\subsection{Examples of categories with map-transformations}

A simple case is  the category of commutative monoids where we can take the bifunctor $\map$ to be precisely the hom-functor. In this case there is no distinction between map-transformations and morphisms. We emphasize the fact that the name map-transformation is being used as a designation for the elements in a given bifunctor $\map(A,B)$  which is part of a map-transformation structure, following the same analogy in which the word \emph{morphism} is used to designate an element in $\hom(A,B)$. Let us see other examples where a map-transformation is not necessarily a morphism.

\begin{example}
Let $\A$ be a pointed category. We can always turn $\A$ into a category with a map-transformation structure. Indeed, the adjoint to the forgetful functor  from monoids to pointed sets can be used to turn each $\hom_{\A}(A,B)$, a pointed set, into a monoid. The  process is standard: generate a free monoid and identify the zero morphism with the neutral element. Let us denote by $\map(A,B)$ the result of that process. A typical element in $\map(A,B)$ is thus a formal sum $h_1+\cdots +h_n$ of homomorphisms $h_i\colon{A\to B}$ in $\A$. The inclusion $\varepsilon$ is clear and the composition $\mu$ is defined as follows. If $h=h_1+\cdots +h_n$ is an element in $\map(B,C)$ and $h'=h'_1+\cdots +h'_n$ is an element in $\map(A,B)$, then $\mu(h,h')\in \map(A,C)$ is the formal expression
$$h_1h'_1+\cdots +h_1h'_n+\cdots h_nh'_1+\cdots +h_nh'_n.$$
Observe that this expression is formed using distributivity on the right, but not on the left. This is in accordance with the fact that condition $(\ref{eq21})$ (in the case of monoids) holds for $f$ and $g$ homomorphisms, whereas if they are merely functions then we have $(x+y)f=xf+yf$ but $g(x+y)$ is not the same as $gx+gy$. 
\end{example}

This example is somehow trivial, but if, for  every object $A$ in $\A$, we choose a set of formal expressions to be identified with the identity morphism, $1_A$, then some interesting situations are expected in the study of semi-biproducts. Let us now see another sort of examples.

\begin{example} Let $\A$ be a pointed category with, say finite limits, and co-products. We can always define a map-transformation structure as follows. For each pair of objects $(A,B)$ let $\map(A,B)$ be $\span(A,B)_{/ \sim}$, that is, the collection of all spans (identifying equivalent ones) in $\A$ form $A$ to $B$. A span from $A$ to $B$ is an ordered pair of morphisms $(u,v)$  as illustrated
\[\xymatrix{A& S\ar[l]_{u}\ar[r]^{v} & B.}\]
Two spans are said to be equivalent is there is a zig-zag of span morphisms connecting them (see e.g. \cite{MacLane}).

The inclusion of morphisms into spans is done by the formula $\varepsilon(h)=(1_A,h)$, for every morphism $h\colon{A\to B}$ in $\A$. The usual composition of spans can be used to define the map $\mu((u,v),(t,s))=(u\bar{t},s\bar{v})$, as illustrated in the diagram below (in which $\bar{t}$ and $\bar{v}$ are obtained by pullback)
\[\xymatrix{&&\cdot \ar[ld]_{\bar{t}}\ar[rd]^{\bar{v}}\\& \cdot \ar[dl]_{u}\ar[dr]^{v} & & \cdot \ar[ld]_{t}\ar[rd]^{s} \\
\cdot & & \cdot & & \cdot}\]  
This formula, in particular, can be applied to the left and right composition of morphisms with spans, as to fulfil equations $(\ref{eq: xf})$ and $(\ref{eq: gx})$. It remains to define a monoid structure on each $\map(A,B)$. The neutral element is necessarily the insertion of the zero morphism, whereas addition is obtained as follows. Let $(u,v)$ and $(u',v')$ be two spans from $A$ to $B$, then we define $(u,v)+(u',v')=([u,u'],[v,v'])$ obtained via co-product as illustrated
\[\xymatrix{A& S+S'\ar[l]_(.6){[u,u']}\ar[r]^(.6){[v,v']} & B.}\]
Up to equivalence, this operation is associative, and the span $(1_A,0_{A,B})$ is the neutral element in $\map(A,B)$.
\end{example}

 Other technical details are omitted here since we will not, at the moment, analyse this example. Moreover, it is somehow trivial. Nevertheless, if we choose a good class of morphisms in $\A$, say $\mathcal{E}$, containing all isomorphisms, which is closed under composition, stable under pullbacks, and if $u,u'\in \mathcal{E}$ are two morphisms with the same codomain, then the induced morphism from the coproduct of their domains, $[u,u']$, into the common codomain, is still in $\mathcal{E}$. Then, we can  restrict the previous example and consider only those spans $(u,v)$ for which $u\in \mathcal{E}$. This will certainly create interesting examples. Take for instance the class of regular epimorphisms in a semi-abelian category.

The previous example is also related to the notion of imaginary morphisms in the following sense. When considering a good class of morphisms $\mathcal{E}$ it may happen that there exists a pointed endo-functor of $\A$, say $(T,\eta)$ for which $\map(A,B)\cong\hom_A(T(A),B)$ so that each class of spans in $\span(A,B)_{/\sim}$ with the left leg in $\mathcal{E}$ is represented by a span of the form
\[\xymatrix{A & T(A) \ar[l]_{\eta_A} \ar[r]^{f} & B.}\] 
This means that we can \emph{imagine}  a map-transformation from $A$ to $B$  as a morphism from $T(A)$ to $B$ (see for example \cite{MontoliRodeloLinden} and references therein). These ideas will be developed in future work.

Let us now see a procedure to construct more concrete examples, given some concrete situations.

\begin{example}
Let $\A$ be a pointed category and $I\colon{\A\to\Mon}$ a faithful functor. Let $F$ denote the forgetful functor from monoids into pointed sets. Then there is a bifunctor
\[\map\colon{\A^{op}\times \A \to \Mon} \]
with $\map(A,B)$ the collection of all pointed maps from the pointed set $FI(A)$ to the pointed set $FI(B)$. The zero  map $0_{A,B}\colon{FI(A)\to FI(B)}$ serves as  neutral element and the usual component-wise addition of pointed maps, that is, $(f+g)(a)=f(a)+g(a)\in FI(B)$, for every $a\in FI(a)$ and $f,g\in \map(A,B)$, makes $\map(A,B)$ a monoid. It is clear that the element $f(a)+g(a)$ comes from the monoid $I(B)$.
 
 The natural inclusion $\varepsilon$ as well as the associative composition $\mu$ are not difficult to obtain in order to make $\A$ a category with map-transformations. Indeed, if $h\colon{A\to B}$ is a morphism in $\A$ then $\varepsilon(h)$ is simply the map $FI(h)$, while $\mu(f,g)=fg$ is nothing but the usual composition of set-theoretical pointed maps.

\end{example}

The previous example (as well as the following one) can be generalized to the case where $\Mon$ is replaced with $\Mon(\C)$, i.e. the category of internal monoids in a category $\C$ with finite limits. Furthermore, it is clear that $\map(A,B)$ does not need to be equal to the collection of all pointed maps from $FI(A)$ to $FI(B)$. Let us make this assertion more precise.

\begin{example}
For every commutative square of categories and functors
\[\xymatrix{&\A\ar[ld]_{V}\ar[rd]^{I}\\\B\ar[rd]_{U}&& \Mon \ar[ld]^{F}\\&\Set_{*}}\]
in which $\A$ and $\B$ are pointed categories and all the functors are faithful (recall that $F$ is the  forgetful functor from monoids to pointed sets) we can always find a family of submonoids $$j_{A,B}\colon{\map(A,B)\to \hom_{Set_{*}}(FI(A),FI(B))}$$ satisfying the following two conditions:
\begin{enumerate}
\item  the inclusion of $\hom_{\B}(V(A),V(B))$, for every pair or objects $A,B$, into $\hom_{\Set_{*}}(FI(A),FI(B))$, factors through $j_{A,B}$, say, as
\[\xymatrix{\hom_{\B}(V(A),V(B))\ar[rd]\ar@{-->}[r]^{i_{A,B}} & \map(A,B)\ar[d]_{j_{A,B}}\\ &\hom_{\Set_{*}}(FI(A),FI(B))}\]
\item for every three objects $A,B,C$, the restriction to the composition mapping of pointed maps, factors as follows
\[\xymatrix{\map(B,C)\times \map(A,B)\ar[dr]\ar@{-->}[r]^(.6){\mu_{A,B,C}} & \map(A,C)\ar[d]_{j_{A,B}}\\ &\hom_{\Set_{*}}(FI(A),FI(C))}\]
\end{enumerate}
In this way we get a bifunctor $\map$ which is defined on morphisms, say $f\colon{A'\to A}$, and  $g\colon{B\to B'}$,
\[\map(f,g)\colon{\map(A,B)\to \map(A',B')}\]
as $map(f,g)(u)=\mu(i(V(f)),\mu(u,i(V(g))))$. It is a monoid homomorphism because $I(f)$ is a monoid homomorphism and $UV(f)=FI(f)$. The inclusion of morphisms into map-transformations is done with the mapping $\varepsilon(h)=i(V(h))$.

\end{example}

In this paper we have considered the case when $\A=\B=\Mon$, $I=V$ is the identity functor, and $U=F$. It is, nevertheless, clear that several interesting other examples can be considered as well:
\begin{enumerate}
\item Take $\A$ to be the category of topological groups with $I$ its forgetful functor into monoids and $V$ its forgetful functor into topological spaces;
\item Take $\A$ to be the category of topological monoids with $I$ its forgetful functor into monoids and $V$ its forgetful functor into topological spaces;
\item Take $\A$ to be the category of (pre)ordered groups with $I$ its forgetful functor into monoids and $V$ its forgetful functor into (pre)ordered sets;
\end{enumerate}

Indeed, when $\A$ is the category of topological groups then we have two possibilities for $\map(A,B)$ (see for example \cite{Ganci}). It can be the set of pointed maps from $FI(A)$ to $FI(B)$, or the set of continuous  pointed maps from $FI(A)$ to $FI(B)$. In the same way, the category of (pre)ordered groups (with it's forgetfull functor into monoids) can be enriched with a map-transformation structure in two different ways. We can take $\map(A,B)$ to be the set of pointed maps from $FI(A)$ to $FI(B)$ but we can also take it to be the set of monotone pointed maps from $FI(A)$ to $FI(B)$. This idea is implicitly used in \cite{Preord} where a systematic study of semi-direct products is conducted in the category of preordered groups. Other possibilities are also interesting to be considered, such is the case of those pointed maps $\tau$ which are not homomorphisms but satisfy the condition $\tau(x+y)\leq \tau(x)+\tau(y)$.

In a certain sense, all examples come from a structure of 2-cells in a pointed category with a horizontal composition. All the details that are omitted in the example below can be found in \cite{NMF.15}, see also \cite{Brown}.

\begin{example}
Let $\A$ be a pointed category equipped with a 2-cell structure $(H,\dom,\cod,0,+)$. Let us suppose it comes equipped with a horizontal composition law for 2-cells, $\mu$, not necessarily obtained from the middle interchange law. Define, $$\map(A,B)=\{x\in H(A,B)\mid \dom(x)=0=\cod(x)\}.$$ Now, for every natural inclusion of pointed sets, $\varepsilon\colon{\hom\to\map}$, which satisfies conditions $(\ref{eq: xf})$ and $(\ref{eq: gx})$ we obtain a map-transformation structure $(\map,\mu,\varepsilon)$ on $\A$.
\end{example}

\section{Semi-biproducts}

In this section we give a definition of semi-biproduct in a category with an abstract map-transformation structure and derive some of its immediate properties. 

\subsection{Semi-biproducts in categories with map-transformations}
A semi-biproduct, as a generalization of biproduct, makes sence in a category with map-transformations. Indeed, it is obtained from a semi-biproduct in monoids, such as the one displayed in diagram $(\ref{diag: biproduct})$, by declaring that the zero-preserving maps $q$ and $s$ are map-transformations (and reformulating the conditions accordingly so that they still make sense).

\begin{definition}\label{def: semi-biproduct}
 Let $\A=(\A,\map,\varepsilon,\mu)$ be a category with a structure of map-transformations. A semi-biproduct in $\A$ is a diagram of the form
\begin{equation}
\label{diag:semi-biproduct}
\xymatrix{X\ar@<-.5ex>[r]_{k} & A\ar@<-.5ex>@{=>}[l]_{q}\ar@<.5ex>[r]^{p} & B \ar@{=>}@<.5ex>[l]^{s}}
\end{equation}
in which $X$, $A$, $B$ are objects in $\A$, $p$, $k$ are morphisms, $q$, $s$ are map-transformations, and the following conditions hold:
\begin{eqnarray}
ps&=&\varepsilon(1_B)\\
qk&=&\varepsilon(1_X)\\
kq+sp&=&\varepsilon(1_A)\\
pk&=&0_{X,B}\\
\mu(q,s)&=&\varepsilon(0_{B,X}).
\end{eqnarray}
\end{definition}
Recall that $ps=\map(1_B,p)(s)\in\map(B,B)$, $qk=\map(k,1_X)(q)\in\map(X,X)$. Similarly, $kq$ and $sp$ are elements in $\map(A,A)$ and $kq+sp\in\map(A,A)$ is their sum using the monoid operation defined in the monoid $\mathrm{Map}(A,A)$, see diagram $(\ref{diag:Map})$.

We will see some immediate properties of this definition, further studies are postponed to future work.

\subsection{Immediate properties of semi-biproducts} Let us suppose that $(X,A,B,p,k,q,s)$ is a  semi-biproduct in a category with a structure of map-transformations, such as the one displayed in diagram (\ref{diag:semi-biproduct}). It follows that if  $k$ has a certain desirable property, then it is a kernel for $p$ and dually $p$ is a cokernel for $k$. The desirable property is the ability to \emph{recognize} when a map-transformation, say $u$, is in the image of $\varepsilon$ or not, that is, if it is of the form $u=\varepsilon(f)$ for some morphism $f$ or not. A morphism with this property is said to be a recognizer.

\begin{definition}
A monomorphism $k\colon{X\to A}$ is said to be  a \emph{recognizer} when for every object $Y$ and map-transformation $u\in\map(Y,X)$, if there exists $f\colon{Y\to A}$ such that $ku=\varepsilon(f)$ then $u=\varepsilon(u')$ for some $u'\colon{Y\to X}$.
\end{definition}

In other words, $k$ recognizes morphisms if, whenever the composition of $k$ with a map-transformation $u$ results in a morphism $ku$, then, $u$ is a morphism itself.

Dually, an epimorphism $p$ co-recognizes morphisms if, whenever the composition of a map-transformation $v$ with $p$ results in a morphism $vp$, then, $v$ is a morphism itself.

In groups  (with zero-preserving maps) it is true that every monomorphism is a recognizer and every epimorphim is a co-recognizer. In monoids it is still true that every monomorphism is a recognizer but only surjective homomorphisms (that is, regular epimorphisms) are co-recognizers. In the category of topological groups (with zero-preserving maps, not necessarily continuous) it is not true in general that monomorphisms are recognizers. The same phenomenon can be observed in the case of preordered groups (with zero-preserving maps, not necessarily monotone). 

\begin{theorem}\label{Th: recognizers}
Let $(X,A,B,p,k,q,s)$ be a semi-biproduct in a category with map-transformations $\A=(\A,\map,\varepsilon,\mu)$, then:
\begin{enumerate}
\item the morphism $k$ is a monomorphism;
\item the morphism $p$ is an epimorphism;
\item if $k$ recognizes morphisms then it is the kernel of $p$;
\item if $p$ co-recognizes morphisms then it is the cokernel of $k$.
\end{enumerate}
\end{theorem}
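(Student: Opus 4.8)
The plan is to treat the four statements in two dual pairs. Parts (1) and (2) are the ``split mono / split epi'' statements and follow purely formally from the retraction identities together with the injectivity of $\varepsilon$. For (1), I would suppose $kf=kg$ for morphisms $f,g\colon Y\to X$; composing $q$ with these equal morphisms gives $q(kf)=q(kg)$, and by associativity of $\mu$ and the compatibility $g\varepsilon(u)f=\varepsilon(guf)$ one rewrites $q(kf)=(qk)f=\varepsilon(1_X)f=\varepsilon(f)$, using the semi-biproduct identity $qk=\varepsilon(1_X)$. Hence $\varepsilon(f)=\varepsilon(g)$, and since $\varepsilon$ is an inclusion of pointed sets, $f=g$. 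Statement (2) is the exact dual: from $ps=\varepsilon(1_B)$ one gets $(fp)s=f(ps)=\varepsilon(f)$, so $fp=gp$ forces $\varepsilon(f)=\varepsilon(g)$ and thus $f=g$.

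For (3) the plan is to exhibit $k$ as the kernel of $p$. The zero condition $pk=0_{X,B}$ is part of the definition, and monicity is (1), so only the universal property needs proof. Given a morphism $t\colon Y\to A$ with $pt=0_{Y,B}$, I would whisker the fundamental identity $kq+sp=\varepsilon(1_A)$ on the right by $t$. Since right whiskering by a morphism is additive, this yields $k(qt)+s(pt)=\varepsilon(t)$. The second summand vanishes because $pt=0$ and composing a map-transformation with a zero morphism returns the zero map-transformation, leaving $k(qt)=\varepsilon(t)$. Now $qt\in\map(Y,X)$ is a map-transformation whose composite with $k$ is the morphism $\varepsilon(t)$; this is precisely the hypothesis of the recognizer property, so $qt=\varepsilon(u')$ for some morphism $u'\colon Y\to X$. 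Then $\varepsilon(ku')=k\varepsilon(u')=\varepsilon(t)$, and injectivity of $\varepsilon$ gives $ku'=t$, the desired factorization; uniqueness is immediate from (1).

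Statement (4) I would carry out dually, presenting $p$ as the cokernel of $k$: again $pk=0$ is given and epicity is (2). Given a morphism $r\colon A\to C$ with $rk=0_{X,C}$, I would whisker $kq+sp=\varepsilon(1_A)$ on the left by $r$. Left whiskering by a morphism is additive, producing $r(kq)+r(sp)=\varepsilon(r)$; the first summand is $(rk)q=0$ since $rk=0$, so $(rs)p=\varepsilon(r)$. Thus $rs\in\map(B,C)$ is a map-transformation whose composite with $p$ is a morphism, and the co-recognizer property yields $rs=\varepsilon(r')$ for a morphism $r'\colon B\to C$; then $\varepsilon(r'p)=\varepsilon(r)$ gives $r'p=r$, with uniqueness from (2).

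The routine part is the split mono/epi bookkeeping of (1) and (2); the delicate points, which I expect to be the main obstacle, are concentrated in (3) and (4). First, one must manipulate the abstract whiskering and $\mu$ carefully, keeping track of which distributivities are available---right and left whiskering by \emph{morphisms} are additive, whereas $\mu$ is not bilinear in general---so that the additive splitting of the identity $kq+sp=\varepsilon(1_A)$ is legitimate. Second, and most importantly, one must justify the vanishing of the cross term ($s(pt)=0$ in (3) and $(rk)q=0$ in (4)); this rests on the fact that whiskering any map-transformation by a zero morphism returns the zero map-transformation, a structural property of a pointed map-transformation structure obtained from $\varepsilon(0)=0$ and $g0f=0$. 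Once this is secured, the recognizer (respectively co-recognizer) hypothesis is applied to exactly the map-transformation $qt$ (respectively $rs$), and the remaining identities are formal consequences of the axioms.
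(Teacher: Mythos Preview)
Your proposal is correct and follows essentially the same route as the paper's own proof: parts (1)--(2) via the retraction identities $qk=\varepsilon(1_X)$, $ps=\varepsilon(1_B)$ together with injectivity of $\varepsilon$, and parts (3)--(4) by whiskering the identity $kq+sp=\varepsilon(1_A)$ with the given test morphism, killing the cross term, and then invoking the (co-)recognizer hypothesis. You are in fact slightly more explicit than the paper in two places---you spell out that right/left whiskering by morphisms is additive (because $\mathrm{Map}$ factors through $\Mon$), and you isolate the vanishing of $s(pt)$ and $(rk)q$ as the delicate step---whereas the paper simply asserts ``$f=kqf+spf$ and $pf=0$'' without further comment.
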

\begin{proof}
Let $u,v\colon{Y\to X}$ be two morphisms in $\A$ with $ku=kv$. Then $qku=qkv$ and since $qk=\varepsilon(1_X)$ we have $\varepsilon(1_X)u=\varepsilon(1_X)v$. By the naturality of the inclusion of morphisms into map-transformations we have   $\varepsilon(1_X u)=\varepsilon(1_X v)$ which gives $\varepsilon(u)=\varepsilon(v)$. We may conclude that $u=v$ because $\varepsilon$ is an inclusion. This proves that $k$ is a monomorphism. Dually we prove that $p$ is an epimorphism.

Let $f\colon{Y\to A}$ be a morphism for which $pf=0$. Then there exists a unique morphism $f'\colon{Y\to X}$ such that $\varepsilon(f')=qf$. Indeed, the map-transformation $qf$ is such that $kqf=f$ (because $f=kqf+spf$ and $pf=0$). The existence of $f'$ comes from the fact that $k$ recognizes morphisms. This proves that $k$ is the kernel of $p$. Dually we prove that $p$ is the co-kernel of $k$.
\end{proof}

There is an analogue to the celebrated split short five lemma for semi-biproducts.

A morphism between semi-biproducts consists of three morphisms $(f_0,f_1,f_2)$ as illustrated
\[
\xymatrix{X\ar[d]_{f_0}\ar@<-.5ex>[r]_{k} & A\ar[d]^{f_1}\ar@<-.5ex>@{=>}[l]_{q}\ar@<.5ex>[r]^{p} & B\ar[d]^{f_2} \ar@{=>}@<.5ex>[l]^{s}\\
X'\ar@<-.5ex>[r]_{k'} & A'\ar@<-.5ex>@{=>}[l]_{q'}\ar@<.5ex>[r]^{p'} & B' \ar@{=>}@<.5ex>[l]^{s'}}
\]
such that $q'f_1=f_0q$, $p'f_1=f_2p$, $k'f_0=f_1k$ and $f_1s=s'f_2$.

\begin{theorem}
Let $(f_0,f_1,f_2)$ be a morphism of semi-biproducts as displayed above. If $f_0$ and $f_2$ are isomorphisms and $f_1$ recognizes or co-recognizes morphisms, then $f_1$ is an isomorphism whose inverse $f_1^{-1}$ is such that $\varepsilon(f_1^{-1})=kf_0^{-1}q'+sf_2^{-1}p'$.
\end{theorem}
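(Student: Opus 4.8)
The plan is to exhibit the map-transformation
\[
g = k f_0^{-1} q' + s f_2^{-1} p' \in \map(A',A)
\]
as the image under $\varepsilon$ of the sought inverse of $f_1$. Each summand is a legitimate map-transformation: $k f_0^{-1} q'$ is $q'\in\map(A',X')$ whiskered on the left by the morphism $k f_0^{-1}\colon{X'\to A}$, while $s f_2^{-1} p'$ is $s\in\map(B,A)$ whiskered on the right by the morphism $f_2^{-1} p'\colon{A'\to B}$; their sum is taken in the monoid $\map(A',A)$. The overall strategy is to verify that $g$ behaves as a one-sided inverse of $f_1$ at the level of map-transformations, then to use the recognizer (resp.\ co-recognizer) hypothesis to promote $g$ to an honest morphism $g'$, and finally to upgrade the one-sided inverse to a two-sided one.

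Suppose first that $f_1$ recognizes morphisms. I would compute $\mu(\varepsilon(f_1),g)=f_1 g\in\map(A',A')$. Left bilinearity of whiskering (condition $(\ref{eq21})$ with right factor $1$) splits this as $f_1 k f_0^{-1} q' + f_1 s f_2^{-1} p'$. The defining relations of a morphism of semi-biproducts, $f_1 k = k' f_0$ and $f_1 s = s' f_2$, together with associativity of whiskering and $f_0 f_0^{-1}=1$, $f_2 f_2^{-1}=1$, collapse this to $k' q' + s' p'$, which is $\varepsilon(1_{A'})$ by the semi-biproduct identity $k'q'+s'p'=\varepsilon(1_{A'})$ for the lower row. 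Thus $f_1 g=\varepsilon(1_{A'})$ is a morphism, so the recognizer property of $f_1$ furnishes a morphism $g'\colon{A'\to A}$ with $g=\varepsilon(g')$. The naturality identity $g\varepsilon(u)f=\varepsilon(guf)$ and injectivity of $\varepsilon$ then give $f_1 g'=1_{A'}$.

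The co-recognizer case is dual. I would instead compute $\mu(g,\varepsilon(f_1))=g f_1\in\map(A,A)$, split it by right bilinearity into $k f_0^{-1} q' f_1 + s f_2^{-1} p' f_1$, and use the remaining two relations $q' f_1 = f_0 q$ and $p' f_1 = f_2 p$ to reduce it to $kq+sp=\varepsilon(1_A)$, the semi-biproduct identity for the upper row. The co-recognizer hypothesis then again provides $g'\colon{A'\to A}$ with $g=\varepsilon(g')$, and as above $\varepsilon(g' f_1)=g f_1=\varepsilon(1_A)$ forces $g' f_1=1_A$.

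Finally I would pass from a one-sided to a two-sided inverse. A recognizer is by definition a monomorphism, so in the first case $f_1$ is monic; from $f_1 (g' f_1)=(f_1 g')f_1=f_1=f_1 1_A$ left-cancellation gives $g' f_1=1_A$. Dually a co-recognizer is an epimorphism, and in the second case right-cancellation applied to $(f_1 g')f_1=f_1 (g' f_1)=f_1=1_{A'}f_1$ gives $f_1 g'=1_{A'}$. Either way $f_1$ is an isomorphism with inverse $g'$, and by construction $\varepsilon(f_1^{-1})=\varepsilon(g')=g=k f_0^{-1} q' + s f_2^{-1} p'$. The only genuinely delicate points are bookkeeping the left versus right whiskering conventions in the two bilinearity expansions, and recalling, for the two-sided inverse, that ``recognizer'' and ``co-recognizer'' already carry the monic, respectively epic, information; beyond that the argument is a direct application of the semi-biproduct identities and the defining relations of a morphism of semi-biproducts.
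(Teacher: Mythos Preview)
Your proof is correct and follows essentially the same approach as the paper. The only organizational difference is that the paper computes both $f_1g=\varepsilon(1_{A'})$ and $gf_1=\varepsilon(1_A)$ at the outset (using the four compatibility relations of the morphism of semi-biproducts), so that once either the recognizer or co-recognizer hypothesis promotes $g$ to a morphism, both inverse identities are already in hand; you instead split into cases first, establish only the relevant one-sided identity, and then recover the other side from the mono/epi content of the recognizer/co-recognizer hypothesis. Both routes are valid and rest on the same ingredients.
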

\begin{proof}
Let us suppose, without loss of generality, that $X=X'$, $B=B'$, $f_0=1_X$, and $f_2=1_B$. Let us also denote $f_1$  simply by $f$. Then, it is clear that the map-transformation $g=kq'+sp'$ is such that $gf=(kq'+sp')f=kq'f+sp'f=kq+sp=\varepsilon(1_A)$. Similarly, $fg=\varepsilon(1_A')$. Hence, $f$ is both a monomorphism and an epimorphism. If $f$ recognizes or co-recognizes morphisms then it follows that $g$ is in fact a morphism and thus $f^{-1}=g$.
\end{proof}

\section{Conclusion}

We have tried to give some directions in which the study of semi-biproducts can be carried out in a general context. In seems appropriate to put it into the light of transformation structures and 2-cells. It would be interesting to explore this concept even further, moving beyond the pointed context. For any category $\A$, together with a 2-cell structure $(H,\dom,\cod,0,+)$, and assuming the existence of a natural transformation $\varepsilon\colon{\hom\to H}$, it would be interesting to see what is a sequence in $\A$ of the form
\[\xymatrix{X\ar[r]^{k}& A\ar[r]^{p} & B}\]
together with 2-cells $q\in H(A,X)$ and $s\in H(B,A)$ such that the following conditions hold
\begin{eqnarray*}
ps=\varepsilon(1_B)\quad &,&\quad
qk=\varepsilon(1_X)\\
\dom(kq)=\cod(sp)\quad &,&\quad
kq+sp=\varepsilon(1_A).
\end{eqnarray*}
In this case there are no zero-morphisms and the horizontal composition $\mu$ does not need to be present. This could be applied, for example, in the category of rings.

\section*{Acknowledgements}



This work is supported by the Fundação para a Ciência e a Tecnologia (FCT) and Centro2020 through the Project references: UID/Multi /04044/2019; PAMI - ROTEIRO/0328/2013 (Nº 022158); Next.parts (17963); and also by CDRSP and ESTG from the Polytechnic Institute of Leiria.



\end{document}